\newcommand{\R}{{\mathbb R}} 
\newcommand{\del}{\partial}
\newcommand{\diver}{\operatorname{div}} 
\newcommand{\dx}{\textnormal{d}x}
\newcommand{\dt}{\textnormal{d}t}
\newcommand{\dy}{\textnormal{d}y}
\newcommand{\dz}{\textnormal{d}z}
\newcommand{\eps}{\epsilon}
\newcommand{\T}{\mathbb{T}^3}
\newcommand{\Z}{\mathbb{Z}}
\newcommand{\bve}{v^E}
\newcommand{\bv}{v}
\newcommand{\bvn}{v^\nu}
\numberwithin{equation}{section}
\newtheorem{theorem}{Theorem}
\newtheorem{lemma}[theorem]{Lemma}
\newtheorem{definition}{Definition}
\begin{document}
\begin{frontmatter}
  \title%[On the Energy conservation for 3D Euler]
  {Three results on the
    Energy conservation for the 3D Euler equations}
  \author{Luigi C. Berselli }
  \address{Dipartimento di Matematica, Universit\`a
    di Pisa, Via F. Buonarroti 1/c, I56127, Pisa, Italy
    email: {luigi.carlo.berselli@unipi.it}}
%
  % \email{luigi.carlo.berselli@unipi.it}
%
\author{Stefanos Georgiadis}  \address{Computer, Electrical and Mathematical
    Science and Engineering Division, King Abdullah Univ. of
    Science and Technology (KAUST), Thuwal 23955-6900, Saudi Arabia
    and Institute for Analysis and Scientific Computing, Vienna
    University of Technology, Wiedner Hauptstra\ss{}e 8-10, 1040 Wien,
    Austria email: {stefanos.georgiadis@kaust.edu.sa}}
%
  % \email{stefanos.georgiadis@kaust.edu.sa}
%
  \date{\today}
  \begin{abstract}
    We consider the 3D Euler equations for incompressible homogeneous
    fluids and we study the problem of energy conservation for weak
    solutions in the space-periodic case. First, we prove the energy
    conservation for a full scale of Besov spaces, by extending some
    classical results to a wider range of exponents. Next, we consider
    the energy conservation in the case of conditions on the gradient,
    recovering some results which were known, up to now, only for the
    Navier-Stokes equations and for weak solutions of the Leray-Hopf
    type. Finally, we make some remarks on the Onsager singularity
    problem, identifying conditions which allow to pass to the limit
    from solutions of the Navier-Stokes equations to solution of the
    Euler ones, producing weak solutions which are energy conserving.
  \end{abstract}
\end{frontmatter}
%\maketitle
\section{Introduction}
The aim of this paper is to extend some nowadays classical results
about the energy conservation for the space-periodic 3D Euler
equations (here $\T:=\big(\R/2\pi\Z\big)^{3}$)
\begin{equation}
  \label{eq:Ebvip}
  \begin{aligned}
    \partial_{t}\bve+(\bve\cdot\nabla)\,\bve+\nabla p^{E}&=0\qquad
    &\text{ in } (0,T)\times \T,
    \\
    \diver\bve&=0\qquad &\text{ in }(0,T)\times \T,
    \\
    \bve(0)&=\bve_0 \qquad & \text{in }\T,
  \end{aligned}
\end{equation}
to embrace a full space-time range of exponents.  Recall that it is known
since~\cite{CET1994} that weak solutions of the Euler equations such that
\begin{equation}
  \label{eq:CET}
  \bve\in C_{w}(0,T;L^{2}(\T))\cap L^{3}(0,T;B^{\alpha}_{3,\infty}(\T)),\qquad\text{with}\quad
  \alpha>\frac{1}{3}, 
\end{equation}
conserve the energy, where $B^{\alpha,\infty}_{3}(\T)$ denotes a
standard Besov space and the motivation for this result is the Onsager
conjecture~\cite{Ons1949} following from Kolmogorov K41 theory.  The
Onsager conjecture (only recently solved also for the negative part,
see Isett~\cite{Ise2018} and De Lellis~\cite{BDLSV2019}) suggested the
threshold value $\alpha=1/3$ for energy conservation. Here, we
consider a combination of space-time conditions, identifying families
of Besov spaces with the range $(1/3,1)$ for the exponent of
regularity balanced by a proper integrability exponent in time. Next,
we consider also the limiting case $\alpha=1$, and finally the
connection of energy conservation with the vanishing viscosity
limits. We recall that the first rigorous results about Onsager
conjecture are probably those of Eyink~\cite{Eyi1994,Eyi1995} in the
Fourier setting and Constantin, E, and Titi~\cite{CET1994} and we are
mainly inspired by these references; for the vanishing viscosity limit
we follow the same path as in Drivas and Eyink~\cite{DE2019}.

The original results we prove concern: 1) the extension
of~\eqref{eq:CET} to a full scale of exponents $\frac{1}{3}<\alpha<1$,
identifying the sharp conditions on the parameters, as previously done
in the setting of H\"older continuous functions in~\cite{Ber2023}; 2)
the extension to the case $\alpha=1$, which means that we look for
conditions on the gradient of $\bve$ in standard Lebesgue spaces; 3)
we identify hypotheses, uniform in the viscosity, on solutions to the
Navier-Stokes equations, which allow us to pass to the limit as
$\nu\to0$ and to construct weak solutions of the Euler equations
satisfying the energy equality.

\medskip

More precisely, concerning point 1) we extend the result
of~\cite{CET1994} to a wider range of exponents proving the following
theorem:
\begin{theorem}
  \label{thm:1}
  Let $\bve$ be a weak solution to the Euler equations such that, for
  $\frac{1}{3}<\alpha<1$, 
  \begin{equation}
    \label{eq:thm1}
\bve\in    L^{1/\alpha}(0,T;B^\beta_{\frac{2}{1-\alpha},\infty}(\T)),
    \qquad\text{with}\quad\alpha<\beta<1.
\end{equation} 
Then, $\bve$ conserves the energy.
\end{theorem}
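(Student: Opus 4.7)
The plan is to adapt the Constantin--E--Titi mollification scheme, refining the classical commutator bound by a H\"older--interpolation argument that exploits both the extra Besov regularity $\beta>\alpha$ and the uniform energy bound $\bve\in L^\infty_tL^2_x$ inherent to any weak solution. First I mollify the Euler equation with a standard kernel $\eta_\eps$, multiply by $\bve_\eps$, and integrate in space. Since $\diver\bve_\eps=0$, the pressure and self-transport contributions vanish, giving
\begin{equation*}
\frac{1}{2}\frac{d}{dt}\|\bve_\eps(t)\|_{L^2}^2 = \int_{\T}\nabla\bve_\eps : r_\eps(\bve,\bve)\,\dx,
\end{equation*}
where $r_\eps(v,v):=(v\otimes v)_\eps - v_\eps\otimes v_\eps$ admits the CET decomposition
\begin{equation*}
r_\eps(v,v)=\int\eta_\eps(y)\,\delta_y v\otimes\delta_y v\,\dy - (v-v_\eps)\otimes(v-v_\eps),\qquad \delta_y v(x):=v(x-y)-v(x).
\end{equation*}
The task reduces to showing that the right-hand side of the first identity is $o(1)$ in $L^1(0,T)$.

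Set $p:=2/(1-\alpha)>3$ and $p':=p/(p-1)$, so that $2p'\in(2,p)$. H\"older in space bounds the flux by $\|\nabla\bve_\eps\|_{L^p}$ times $L^{p'}$-norms of quadratic expressions in $\delta_y\bve$ and $\bve-\bve_\eps$; I interpolate each such $L^{2p'}$-factor between $L^2$ and $L^p$ via
\begin{equation*}
\|f\|_{L^{2p'}}\le\|f\|_{L^2}^{(p-3)/(p-2)}\|f\|_{L^p}^{1/(p-2)}.
\end{equation*}
Plugging in the standard mollifier--Besov estimates $\|\nabla v_\eps\|_{L^p}\le C\eps^{\beta-1}\|v\|_{B^\beta_{p,\infty}}$, $\|\delta_y v\|_{L^p}\le C|y|^\beta\|v\|_{B^\beta_{p,\infty}}$, $\|v-v_\eps\|_{L^p}\le C\eps^\beta\|v\|_{B^\beta_{p,\infty}}$, and controlling all $L^2$-factors by $\|\bve\|_{L^\infty_tL^2}$, the pointwise-in-$t$ estimate collapses, thanks to the crucial identity $p/(p-2)=1/\alpha$, to
\begin{equation*}
\Bigl|\int_{\T}\nabla\bve_\eps : r_\eps(\bve,\bve)\,\dx\Bigr|\le C\,\eps^{\beta/\alpha-1}\,\|\bve(t)\|_{B^\beta_{p,\infty}}^{1/\alpha}\,\|\bve\|_{L^\infty_tL^2}^{2(p-3)/(p-2)}.
\end{equation*}
Integrating in $t$ yields a bound of order $\eps^{\beta/\alpha-1}$, which vanishes because $\beta>\alpha$. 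A standard passage to the limit in the mollified energy identity, using $\bve_\eps(t)\to\bve(t)$ strongly in $L^2$ at each $t$, then completes the proof.

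The main obstacle, and the reason a direct CET-style cubic estimate does not suffice here, is that the naive bound $C\eps^{3\beta-1}\|\bve\|_{B^\beta_{p,\infty}}^3$ requires $\bve\in L^3_tB^\beta_{p,\infty}$, whereas $\alpha>1/3$ forces $1/\alpha<3$. The interpolation trick above \emph{steals} integrability from the a priori energy bound so as to reduce the accumulated power of the Besov norm from $3$ down to exactly $1/\alpha$, matching the assumed time integrability; the choice $p=2/(1-\alpha)$ in the hypothesis is dictated by the arithmetic $p/(p-2)=1/\alpha$ that drives this matching, and the assumption $\beta>\alpha$ is precisely what makes the residual spatial exponent $\beta/\alpha-1$ positive.
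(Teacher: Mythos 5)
Your proposal is correct and takes essentially the same route as the paper: mollify, apply the Constantin--E--Titi decomposition, and trade integrability between the energy bound and the Besov norm so that the accumulated Besov power drops from $3$ to $p/(p-2)=1/\alpha$ with $p=2/(1-\alpha)$, leaving the positive exponent $\eps^{\beta/\alpha-1}$. The paper implements your interpolation $\|f\|_{L^{2p'}}\le\|f\|_{L^2}^{(p-3)/(p-2)}\|f\|_{L^p}^{1/(p-2)}$ in the equivalent form of a pointwise H\"older splitting $|f|^{2}=|f|^{\eta}|f|^{2-\eta}$ with $\eta=(1-\alpha)/\alpha=2/(p-2)$, so the two arguments coincide term by term.
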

Similar results have already been proved in the setting of H\"older continuous functions
(see~\cite{Ber2023b}) and in both cases, one can see that the limiting case
$\alpha\to1^{-}$ leads formally to $L^{1}(0,T;W^{1,\infty})$, which corresponds to the
Beale-Kato-Majda criterion. Anyway, working directly with the velocity $\bve$ in a Sobolev
space, we obtain the following result:
\begin{theorem}
  \label{thm:2}
  Let $\bve$ be a weak solution to the Euler equations such that, for
  $q>2$,
  \begin{equation}
    \label{eq:thm2}
 \bve\in  L^{r}(0,T;W^{1,q}(\T)),\qquad \text{with}\quad r>\frac{5q}{5q-6}.
\end{equation}
Then, $\bve$ conserves the energy.
\end{theorem}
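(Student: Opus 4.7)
I would adapt the Constantin--E--Titi mollification approach to the gradient-in-$L^q$ setting, working directly with the velocity rather than reducing to Theorem~\ref{thm:1}. Let $v_\varepsilon := \bve * \eta_\varepsilon$ with $\eta_\varepsilon$ a standard symmetric mollifier on $\T$. Testing the mollified Euler system with $v_\varepsilon$ and using $\diver v_\varepsilon=0$ to kill the cubic term $\int(v_\varepsilon\cdot\nabla v_\varepsilon)\cdot v_\varepsilon\,\dx$, the mollified energy identity reduces to
\[
\tfrac{1}{2}\tfrac{d}{dt}\|v_\varepsilon(t)\|_{L^2}^2 = \int_\T R_\varepsilon:\nabla v_\varepsilon\,\dx, \qquad R_\varepsilon:=(\bve\otimes\bve)_\varepsilon - v_\varepsilon\otimes v_\varepsilon,
\]
and proving the energy equality is reduced to showing $\int_0^T E_\varepsilon(t)\,\dt \to 0$ as $\varepsilon\to 0$, where $E_\varepsilon(t):=\int_\T R_\varepsilon:\nabla v_\varepsilon\,\dx$.

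To control the flux I would use the commutator identity
\[
R_\varepsilon(x) = \int_\T \eta_\varepsilon(y)\,\delta_y\bve(x)\otimes\delta_y\bve(x)\,\dy - \delta_\varepsilon\bve(x)\otimes\delta_\varepsilon\bve(x),
\]
with $\delta_y\bve(x):=\bve(x-y)-\bve(x)$ and $\delta_\varepsilon\bve:=v_\varepsilon-\bve$, estimating each contribution via a three-term H\"older inequality with exponents $(a_1,a,a)$ satisfying $1/a_1+2/a=1$. The two inputs are: (i) the mollifier bound $\|\nabla v_\varepsilon\|_{L^{a_1}}\leq C\varepsilon^{-3(1/q-1/a_1)_+}\|\nabla\bve\|_{L^q}$, which costs a negative power of $\varepsilon$ only when $a_1>q$; and (ii) the interpolation of $\|\delta_y\bve\|_{L^a}$ between $\|\delta_y\bve\|_{L^2}\leq 2\|\bve\|_{L^2}$ and the Lipschitz-type bound $\|\delta_y\bve\|_{L^q}\leq |y|\,\|\nabla\bve\|_{L^q}$, which yields $\|\delta_y\bve\|_{L^a}\leq C|y|^\lambda\|\bve\|_{L^2}^{1-\lambda}\|\nabla\bve\|_{L^q}^{\lambda}$ with $\lambda:=q(a-2)/(a(q-2))$ for $a\in[2,q]$; the analogous bound for $\delta_\varepsilon\bve$ replaces $|y|$ by $\varepsilon$.

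Integrating in $y$ against $\eta_\varepsilon$ and using $\int_\T\eta_\varepsilon(y)|y|^{2\lambda}\,\dy\leq C\varepsilon^{2\lambda}$ produces the pointwise-in-time estimate
\[
|E_\varepsilon(t)|\leq C\,\varepsilon^{\gamma}\,\|\bve(t)\|_{L^2}^{2(1-\lambda)}\,\|\nabla\bve(t)\|_{L^q}^{1+2\lambda},\qquad \gamma:=2\lambda-3\bigl(\tfrac{1}{q}-\tfrac{1}{a_1}\bigr)_+.
\]
A H\"older inequality in $t$, together with the bound $\bve\in L^\infty(0,T;L^2)$ inherent to the notion of weak solution, then gives $\int_0^T|E_\varepsilon|\,\dt\leq C\varepsilon^\gamma$ provided $1+2\lambda\leq r$. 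Everything therefore boils down to choosing $a$ so that both $\gamma>0$ and $1+2\lambda\leq r$ hold simultaneously. A direct algebraic computation shows that the two constraints are compatible exactly when $r>5q/(5q-6)$: at the critical $a^*=2q(5q-6)/(5q^2-9q+6)$ one finds $\lambda^*=3/(5q-6)$ and $1+2\lambda^*=5q/(5q-6)$ with $\gamma(a^*)=0$, so any $a$ slightly above $a^*$ furnishes the strict inequalities needed.

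\emph{The hard part} is precisely this balancing. The sharp threshold $5q/(5q-6)$ lies strictly below the naive value $q/(q-2)$ that one obtains by restricting to the regime $a_1\leq q$ (no mollifier loss)---the same value that a Gagliardo--Nirenberg embedding of $L^\infty(L^2)\cap L^r(W^{1,q})$ into the Besov scale and a direct appeal to Theorem~\ref{thm:1} would produce. One is therefore forced to work in the regime $a_1>q$ and play the negative power $\varepsilon^{-3(1/q-1/a_1)}$ from the mollifier against the positive power $\varepsilon^{2\lambda}$ coming from the finite differences, tracking the algebra carefully. Once the decay $\int_0^T|E_\varepsilon|\,\dt\to 0$ is secured, energy conservation follows by integrating the mollified identity in $t$ and passing to the limit $\varepsilon\to 0$, using $\bve\in C_w(0,T;L^2)$ and the strong $L^2$-convergence $v_\varepsilon(t)\to\bve(t)$ to identify the limits of $\|v_\varepsilon(t)\|_{L^2}^2$ at both endpoints.
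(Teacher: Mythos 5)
Your proposal is correct and follows essentially the same route as the paper: mollify, use the Constantin--E--Titi commutator decomposition, apply a three-exponent H\"older inequality, interpolate the increments between $L^{2}$ (from the energy bound) and $L^{q}$ (from $\nabla\bve$), and pay the mollifier loss $\varepsilon^{-3(1/q-1/p')}$ by working with an integrability exponent above $q$; your critical values $a^{*}$, $\lambda^{*}=3/(5q-6)$ and $1+2\lambda^{*}=5q/(5q-6)$ coincide exactly with the paper's optimization over $p$ in the range $\frac{(5q-6)q}{5q^{2}-9q+6}<p<\min\{\frac{q}{2},\frac{q}{q-1}\}$. The only cosmetic difference is that you test the mollified system with $v_{\varepsilon}$ rather than testing the weak formulation with $(\bve_{\varepsilon})_{\varepsilon}$, which is equivalent.
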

The sharpness of this result comes by observing that we recover for the Euler equations
the same results (at least in this range of exponents) which are known for Leray-Hopf weak
solutions to the Navier-Stokes equations, see the recent results in~\cite{BY2019,BC2020}.

\medskip

Concerning point 3) we extend results from~\cite{DE2019,Ber2023} on
the emergence of solutions to the Euler equations satisfying the
energy equality as inviscid limits of Leray-Hopf weak solutions to the
Navier-Stokes equations ($\nu>0$)
\begin{equation}
  \label{eq:NSEbvip}
  \begin{aligned}
    \partial_{t}\bvn+(\bvn\cdot\nabla)\,\bvn-\nu\Delta\bvn+\nabla
    p^{\nu}&=0\qquad &\text{ in }(0,T)\times \T,
    \\
    \diver\bvn&=0\qquad &\text{ in }(0,T)\times \T,
    \\
    \bvn(0 )&=\bvn_0 \qquad & \text{ in }\T.
  \end{aligned}
\end{equation}

This result generalizes to a wider range of exponents the result
from~\cite{DE2019} which deals with $\alpha\sim 1/3$ and also the
results in \cite{Ber2023}, which are in the setting of H\"older
continuous functions, but with a more restrictive time-dependence for
$\alpha>1/2$. We have the following result:
\begin{theorem}
\label{thm:3}
Let $\{\bvn\}_{\nu>0}$ be a family of weak solutions of the NSE with
the same initial datum $v_{0}\in H\cap B^\beta_{\frac{2}{1-\alpha},\infty}(\T)$, $\beta>\alpha$.   Let us also 
assume that for   $\frac{1}{3}<\alpha<1$, and $\alpha<\beta<1$
there exists a constant $C_{\alpha,\beta}>0$, independent of $\nu>0$,
such that
  \begin{equation}
    \label{eq:1}
    \|\bvn\|_{L^{1/\alpha}(0,T; B^\beta_{\frac{2}{1-\alpha},\infty})}\leq
    C_{\alpha,\beta},\qquad\forall\,\nu\in(0,1].%\text{with}\quad\beta>{\alpha}.
%    \qquad \text{for some }
  \end{equation}
  Then, in the limit $\nu\to0$ the family $\{v^{\nu}\}$ converges (up
  to a sub-sequence) to a weak solution $\bve$ in $[0,T]$ of the Euler
  equations satisfying the energy equality. The same holds if
  alternatively $v_{0}\in H\cap W^{1,q}(\T)$, with $q>2$, and
  $\|\bvn\|_{L^{r}(0,T; W^{1,q}(\T))}\leq C_{r,q}$, for all
  $\nu\in(0,1]$ and for
  $r>\frac{5q}{5q-6}$.%\text{with}\quad\beta>{\alpha}.
\end{theorem}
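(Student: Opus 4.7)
My plan is to combine a standard vanishing-viscosity compactness argument with the \emph{a priori} energy conservation criteria just proved. The strategy is: extract a subsequence $\bvn \to \bve$ via Aubin--Lions compactness, pass to the limit in the weak formulation of~\eqref{eq:NSEbvip} to identify $\bve$ as a weak solution of~\eqref{eq:Ebvip} with initial datum $v_{0}$, propagate the uniform bound~\eqref{eq:1} (or the $W^{1,q}$ analogue) to $\bve$ by weak-$\ast$ lower semicontinuity, and finally invoke Theorem~\ref{thm:1} (or Theorem~\ref{thm:2}) on $\bve$ to conclude that the energy of the limit is conserved.

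For compactness, spatial compactness is immediate on the torus: $B^\beta_{2/(1-\alpha),\infty}(\T) \hookrightarrow \hookrightarrow L^{2/(1-\alpha)}(\T)$, and analogously $W^{1,q}(\T) \hookrightarrow \hookrightarrow L^{q}(\T)$. For time compactness I would use the equation itself, writing $\partial_{t}\bvn = -\nabla\cdot(\bvn\otimes\bvn) - \nabla p^{\nu} + \nu\Delta \bvn$ and bounding each term in a fixed negative-regularity dual space uniformly in $\nu$. Using the Besov embedding $B^{\beta}_{2/(1-\alpha),\infty} \hookrightarrow L^{2/(1-\alpha)}$, the quadratic term $\bvn\otimes\bvn$ is uniformly bounded in $L^{1/(2\alpha)}(0,T;L^{1/(1-\alpha)})$, hence its divergence in $L^{1/(2\alpha)}(0,T;W^{-1,1/(1-\alpha)})$; the pressure is controlled in the same way via the usual elliptic relation $-\Delta p^{\nu} = \nabla^{2}:(\bvn\otimes\bvn)$; and $\nu\Delta\bvn$ is bounded by $\nu$ times a negative-order norm, so it is uniformly bounded and in fact vanishes in distributions as $\nu \to 0$. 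Aubin--Lions then delivers a subsequence converging strongly to some $\bve$ in, say, $L^{1/\alpha}(0,T;L^{2/(1-\alpha)})$, which is more than enough to pass to the limit in the quadratic term of the weak formulation; the initial datum transfers to the limit since $\bvn(0)=v_{0}$ for every $\nu$ and $\bve \in C_{w}([0,T];L^{2})$.

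The uniform bound~\eqref{eq:1}, being in an $L^{p}$-in-time space with values in a weakly-$\ast$ closed Besov ball, transfers to $\bve$ by lower semicontinuity, and the same holds for the $W^{1,q}$ alternative. Therefore, $\bve$ satisfies the hypotheses of Theorem~\ref{thm:1} (respectively Theorem~\ref{thm:2}) and conserves the energy, namely $\|\bve(t)\|_{L^{2}}^{2} = \|v_{0}\|_{L^{2}}^{2}$ for every $t \in [0,T]$.

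The main obstacle is the careful choice of the embedding chain used for Aubin--Lions: one has to pick a negative-regularity space in which $\partial_{t}\bvn$ is uniformly bounded while the target space of strong convergence is still fine enough to handle the quadratic nonlinearity and to be compatible with invoking Theorems~\ref{thm:1}--\ref{thm:2} on $\bve$. This is essentially bookkeeping in Besov and Sobolev indices, but it must be done uniformly for $\alpha \in (1/3,1)$, so the integrability exponents vary considerably; once the indices are fixed, the rest of the argument---identifying the Euler limit, handling the pressure, and dropping the viscous term thanks to the $\nu$ prefactor---is standard.
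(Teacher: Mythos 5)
Your argument is sound and reaches the stated conclusion, but it takes a genuinely different route at the step that actually produces the energy equality. You transfer the uniform bound \eqref{eq:1} to the limit by lower semicontinuity (in practice a Fatou argument: the translation semi-norm defining $B^{\beta}_{2/(1-\alpha),\infty}$ is a supremum of norms that are l.s.c.\ under the a.e.-in-time $L^{2}$ convergence furnished by Aubin--Lions) and then invoke Theorem~\ref{thm:1} (resp.\ Theorem~\ref{thm:2}) on the limiting Euler solution. The paper never applies Theorems~\ref{thm:1}--\ref{thm:2} to the limit: after the same Aubin--Lions extraction (with $\partial_{t}\bvn$ bounded in $L^{1/2\alpha}(0,T;B^{\beta-2}_{1/(1-\alpha),\infty})$ rather than your $W^{-1,1/(1-\alpha)}$-based space), it works at the level of the viscous solutions, testing the NSE with $(\bvn_{\eps})_{\eps}$ and balancing the commutator flux, of size $O(\eps^{(\beta-\alpha)/\alpha})$, against the dissipation, of size $O(\nu\eps^{2(\beta-1)})$ for $\beta\leq\tfrac12$ and $O(\nu\eps^{-1})$ for $\beta>\tfrac12$, then optimizing $\eps\sim\nu^{\gamma}$. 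That buys explicit rates $O(\nu^{(\beta-\alpha)/(\alpha-2\alpha\beta+\beta)})$, resp.\ $O(\nu^{(\beta-\alpha)/\beta})$, for the energy defect, which is the quantitative ``Onsager singularity'' content emphasized in the introduction; your route is shorter and suffices for the bare statement of the theorem, but yields no rate. Two small points to tighten: for $\alpha>\tfrac12$ the exponent $1/(2\alpha)$ is below $1$, so to place $\bvn\otimes\bvn$ in a genuine (normed) Bochner space you should interpolate with the uniform $L^{\infty}(0,T;H)$ bound, e.g.\ $\|\bvn\otimes\bvn\|_{L^{s}}\leq\|\bvn\|_{2}\,\|\bvn\|_{2/(1-\alpha)}$ with $\tfrac{1}{s}=\tfrac12+\tfrac{1-\alpha}{2}$, which lands in $L^{1/\alpha}(0,T;L^{s})$; and on the torus the pressure is eliminated by testing with divergence-free fields (or by the Leray projector, as in the paper), so the elliptic estimate for $p^{\nu}$ is not actually needed.
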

The problem of vanishing viscosity and construction of distributional
(dissipative) solutions to the Euler equations has a long history and
we mainly refer to Duchon and Robert~\cite{DR2000} for similar
results. We also wish to mention the Fourier-based approach recently
developed by Chen and Glimm~\cite{CG2012,CG2019} where spectral
properties are used to deduce certain fractional regularity results,
suitable to prove the inviscid limit. Our proof uses standard
mollification and handling of the commutation terms. Even though the
results use elementary techniques, they are new and rather sharp. Note
also that Theorem~\ref{thm:3} implies that ``quasi-singularities'' are
required in Leray-Hopf weak solutions in order to account for
anomalous energy dissipation, see the discussion and interpretation
in~\cite{DE2019,Ber2023}. Moreover, due to results
in~\cite{BC2020, BY2019}, weak solutions of the NSE with
$\nabla \bvn\in L^{r}(0,T;L^{q}(\T))$, $r>\frac{5q}{5q-6}$ conserve the
energy and the same holds for the limiting solutions of the Euler equations.

\smallskip

\textbf{Plan of the paper:} In Section~\ref{sec2} we set up our
notation by giving the definitions of the spaces and the solutions
that we use throughout the paper. Moreover, we recall the basic
properties of the mollification and the commutator formula that will
be used extensively in the proofs of the theorems. In
Section~\ref{sec3} we give the proofs of Theorem~\ref{thm:1}
and~\ref{thm:2}, investigating minimum regularity conditions for
energy conservation, for weak solutions to the Euler
equations. Finally, in Section~\ref{sec4}, we give the proof of
Theorem~\ref{thm:3}, dealing with the emergence of weak solutions of
Euler in the limiting case $\nu\to0$.
%
%\textbf{Plan of the paper:} We recall preliminaries and in
%Section~\ref{sec:Euler} we report some recent results for the Euler
%equations, improving the conditions on the time-variable, and
%extending the range of allowed H\"older exponent. The extension to the
%viscous case, is recalled in Section~\ref{sec:NSE}. Finally, in the
%last and more original Section~\ref{sec:Onsager-singularity} we
%combine both results to consider the emergence of weak solutions to
%the Euler conserving the energy as limit $\nu\to0$. In addition a
%family of ``Onsager singularity theorems'' is proved showing that
%sequences of smooth Leray-Hopf solutions cannot have total dissipation
%vanishing too slowly.
%

%%%%%%%%%%%%%%%%%%%%%%%%%%%%%%%%%%%%%%%%%%%%%%%%%%%%
%%%%%%%%%%%%%%%%%%%%%%%%%%%%%%%%%%%%%%%%%%%%%%%%%%%%
%%%%%%%%%%%%%%%%%%%%%%%%%%%%%%%%%%%%%%%%%%%%%%%%%%%%
%%%%%%%%%%%%%%%%%%%%%%%%%%%%%%%%%%%%%%%%%%%%%%%%%%%%

\section{Notation}\label{sec2}
In the sequel we will use the Lebesgue
$(L^{p}(\T),\|\,.\,\|_{p})$ and Sobolev
$(W^{1,p}(\T),\|\,.\,\|_{1,p})$ spaces, with $1\leq p\leq\infty$; for
simplicity we denote by $(\,.\,,\,.\,)$ and $\|\,.\,\|$ the $L^{2}$ scalar product and
norm, respectively, while the other norms are explicitly indicated.  By $H$ and $V$ we denote the
closure of smooth, periodic, and divergence-free vector fields in $L^{2}(\T)$ or
$W^{1,2}(\T)$, respectively. Moreover, we will use the Besov spaces
$B^{\alpha}_{p,\infty}(\T)$, which are the same as {N}ikol'ski\u{\i} spaces
$\mathcal{N}^{\alpha,p}(\T)$. They are sub-spaces of $L^{p}$ for which there exists $c>0$, such that
$\|u(\cdot+h)-u(\cdot)\|_{p}\leq c |h|^{\alpha}$, and the smallest constant is the semi-norm $[\,.\,]_{B^{\alpha}_{p,\infty}}$.

To properly set the problem we consider, we give the definitions of weak solutions:
  \begin{definition}[Weak solution to the Euler equations]
  \label{def:Euler-weak-solution}
Let $\bv_0\in H$. A measurable function
  $\bve:\,(0,T)\times \T\to \R^3$ is called a  weak
  solution to the Euler equations~\eqref{eq:Ebvip} if $
  %the
  %following hold true:  
%\begin{equation*}
  \bve\in L^\infty(0,T;H)$, 
%\end{equation*}
%and if $\bve$
  solves the equations in the weak sense:
\begin{equation}
  \label{eq:distributional-solution2E}
  \int_0^{T}\int_{\T}\Big[\bve\cdot\partial_{t}\phi+
  (\bve\otimes\bve):\nabla\phi\Big]\,\dx\,d
  t=-\int_{\T}\bv_0\cdot\phi(0)\,\dx, 
\end{equation}
for all $\phi\in \mathcal{D}_T:=\Big\{\phi\in C^{\infty}_{0}([0,T[\times\T):\
  \diver\phi=0\Big\}$.
\end{definition}
We also recall the definition of weak solutions to the Navier-Stokes equations. 
\begin{definition}[Space-periodic Leray-Hopf weak solution]
  \label{def:LH-weak-solution}
  Let $\bv_0^{\nu}\in H$. A
  measurable function $\bvn:\,(0,T)\times \T\to \R^{3}$ is called a
  Leray-Hopf weak solution to the space-periodic
  NSE~\eqref{eq:NSEbvip} if
  $ \bvn\in L^\infty(0,T;H)\cap L^2(0,T;V)$ and the following hold
  true:
  \\
  % \begin{equation*}
%  \bvn\in L^\infty(0,T;H)\cap L^2(0,T;V);
%\end{equation*}
The function  $\bvn$ solves the equations in the weak sense:
\begin{equation}
  \label{eq:distributional-solution2}
  \int_0^{T}\int_{\T}\Big[\bvn\cdot\partial_{t}\phi
  -\nu\nabla\bvn:\nabla\phi+
  (\bvn\otimes\bvn):\nabla\phi\Big]\,\dx\dt=
  % -\int_0^\infty\langle f,\phi\rangle\,d t
  -\int_{\T}\bvn_0\cdot\phi(0)\,\dx,
\end{equation}
for all $\phi\in \mathcal{D}_T$;
\\
The (global) energy inequality holds:
\begin{equation}
  \label{eq:energy_inequality}
  \frac{1}{2} \|\bvn(t)\|^2_{2}+\nu\int_0^t\|\nabla\bvn(\tau)\|^2_{2}\,
  d \tau\leq\frac{1}{2}\|\bvn_0\|^2_{2}%+\int_0^t\langle f,\bvn\rangle\,\de
                                %s
  ,\qquad\forall\,t\in[0,T];
\end{equation}
\\
The initial datum is strongly attained:
%\begin{equation}
%  \label{eq:initial_datum}
  $\lim_{t\to0^+}\|\bvn(t)-\bvn_0\|=0$.
%\end{equation}
\end{definition}
%
%\begin{quote} see if needed
%  $\Omega=\T$
%
%  $\Omega_T=\Omega\times(0,T)$
%
%  $H=\{v\in L^2(\Omega)~|~\diver r v=0\}$
%
%  $V=\{v\in H_0^1(\Omega)~|~\diver r v=0\}$
%
%  $C^\infty_{0,\sigma}=\{v\in C_0^\infty(\Omega)~|~\diver r v=0\}$
%\end{quote}
\subsection{Mollification and Sobolev/Besov spaces}
We use the classical tools of mollification to justify calculations
and to this end we fix 
$\rho\in C^{\infty}_{0}(\R^{3})$ such that
%\begin{equation*}
$\rho(x)=\rho(|x|),\   \rho\geq0,\ \text{supp }\rho\subset
  B(0,1)\subset\R^3,\ \int_{\R^{3}}\rho(x)\,\dx=1$,  
%\end{equation*}
 and we define, for $\epsilon\in(0,1]$, the Friedrichs family 
$\rho_{\epsilon}(x):=\epsilon^{-3}\rho(\epsilon^{-1}x)$. Then, for any function
$f\in L^{1}_{loc}(\R^{3})$ we define by the usual convolution
\begin{equation*}
  f_{\epsilon}(x):=\int_{\R^{3}}\rho_{\epsilon}(x-y)f(y)\,\dy=\int_{\R^{3}}
  \rho_{\epsilon}(y)f(x-y)\,\dy. 
\end{equation*}
%It turns out that the last integral is evaluated on
%$B(0,\epsilon)=\{y: |y|<\epsilon\}$, which is contained in $]-\pi,\pi[^{3}$,
%for small $\epsilon>0$. So if needed we can restate the definition as
%\begin{equation*}
%  f_{\epsilon}(x):=\int_{\T}\rho_{\epsilon}(y)f(x-y)\dy. 
%\end{equation*}
If $f\in L^{1}(\T)$, then $f\in L^{1}_{loc}(\R^{3})$, and it turns out that
$f_{\epsilon}\in C^{\infty}(\T)$ is $2\pi$-periodic along the $x_{j}$-direction, for
$j=1,2,3$.
%since 
Moreover, if $f$ is a divergence-free vector field, then
$f_{\epsilon}$ is a smooth divergence-free vector field. We report now
the basic properties of the convolution operator we will use in the
sequel, see for instance~\cite{CET1994,Ber2021,Ber2023}.
\begin{lemma}
  \label{lem:convolution-Holder}
Let $\rho$ be as above. If $u\in L^{q}(\T)$, then $\exists\, C>0$
  (depending only on $\rho$) such that
  \begin{equation}
    \label{eq:conv7}
            \|u_{\epsilon}\|_{r}\leq\frac{C}{\epsilon^{3(\frac{1}{q}-\frac{1}{r})}}
                      \|u\|_{q}\quad \text{for all }r\geq q;
                    %        \|u_{\epsilon}\|_{q}\leq
                    %          \frac{C}{\epsilon^{3(\frac{1}{r}-\frac{1}{q})}}
                    %          \|u\|_{r}\quad \text{for all }1\leq
                    %          r\leq q;
  \end{equation}
  If   $u\in B^{\beta}_{q,\infty}(\T)$, then 
  %\begin{equation*}
    \begin{align}
      \label{eq:conv1}      &   
                              \|u(\cdot +y )-u(\cdot )\|_{q}\leq
                              [u]_{B^{\beta}_{q,\infty}}|y |^{\beta},
      \\
      \label{eq:conv2}      &   \|u-u_{\epsilon}\|_{q}\leq
                              [u]_{B^{\beta}_{q,\infty}}\, \epsilon^{\beta}, 
      \\
      \label{eq:conv3}      &\|\nabla u_{\epsilon}\|_{q}\leq C
                              [u]_{B^{\beta}_{q,\infty}}\,\epsilon^{\beta-1}, 
    \end{align}
    while if $u\in W^{1,q}(\T)$, then 
   \begin{align}
    \label{eq:conv4}      &   
                              \|u(\cdot +y )-u(\cdot)\|_{q}\leq
                              \|\nabla u\|_{q}|y |,
      \\
      \label{eq:conv5}      &   \|u-u_{\epsilon}\|_{q}\leq
                              \|\nabla u\|_{q}\, \epsilon, 
      \\
      \label{eq:conv6}      &\|\nabla u_{\epsilon}\|_{q}\leq C
                              \|u\|_{q}\,\epsilon^{-1}.
    \end{align}
  %\end{equation*}
%    where the constant $C$ depends only on $\rho$. 
  \end{lemma}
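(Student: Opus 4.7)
The plan is to treat each estimate by standard convolution arguments, with a uniform scheme: estimates involving $u_\epsilon$ alone come from Young's inequality and the scaling of $\rho_\epsilon$, while the difference/derivative estimates come from expressing the quantity as an integral of finite differences of $u$ against $\rho_\epsilon$ or $\nabla\rho_\epsilon$, and then invoking Minkowski's integral inequality together with the support of $\rho_\epsilon$.

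First I would handle \eqref{eq:conv7}: write $u_\epsilon=\rho_\epsilon\ast u$ and apply Young's convolution inequality, $\|u_\epsilon\|_r\leq\|\rho_\epsilon\|_s\|u\|_q$ with $1+\tfrac{1}{r}=\tfrac{1}{s}+\tfrac{1}{q}$; the scaling $\rho_\epsilon(x)=\epsilon^{-3}\rho(\epsilon^{-1}x)$ gives $\|\rho_\epsilon\|_s=\epsilon^{-3(1-1/s)}\|\rho\|_s=C\epsilon^{-3(1/q-1/r)}$. Estimate \eqref{eq:conv1} is essentially the definition of the Nikol'ski\u\i\ semi-norm that the excerpt recalls, so nothing is needed beyond unpacking the definition, and \eqref{eq:conv4} is the standard finite-difference bound $\|u(\cdot+y)-u(\cdot)\|_q\le\|\nabla u\|_q\,|y|$ obtained via the fundamental theorem of calculus applied to $u(x+ty)$ and Fubini.

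Next I would prove \eqref{eq:conv2} by writing, using $\int\rho_\epsilon=1$,
\begin{equation*}
u(x)-u_\epsilon(x)=\int_{\R^3}\rho_\epsilon(y)\bigl[u(x)-u(x-y)\bigr]\,\dy,
\end{equation*}
taking $L^q_x$ norms and passing them inside by Minkowski's integral inequality; since $\rho_\epsilon$ is supported in $B(0,\epsilon)$, inserting \eqref{eq:conv1} gives $\|u-u_\epsilon\|_q\leq [u]_{B^\beta_{q,\infty}}\int\rho_\epsilon(y)|y|^\beta\,\dy\leq [u]_{B^\beta_{q,\infty}}\epsilon^\beta$. Estimate \eqref{eq:conv5} is identical with \eqref{eq:conv4} in place of \eqref{eq:conv1}.

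The key small trick is in \eqref{eq:conv3}: since $\int_{\R^3}\nabla\rho_\epsilon(y)\,\dy=0$, one has
\begin{equation*}
\nabla u_\epsilon(x)=\int_{\R^3}\nabla\rho_\epsilon(y)\bigl[u(x-y)-u(x)\bigr]\,\dy,
\end{equation*}
so that Minkowski's integral inequality and \eqref{eq:conv1} yield $\|\nabla u_\epsilon\|_q\leq [u]_{B^\beta_{q,\infty}}\int|\nabla\rho_\epsilon(y)|\,|y|^\beta\,\dy$; the change of variables $z=y/\epsilon$ gives $\int|\nabla\rho_\epsilon(y)|\,|y|^\beta\,\dy=\epsilon^{\beta-1}\int|\nabla\rho(z)|\,|z|^\beta\,\dz=C\epsilon^{\beta-1}$, which proves \eqref{eq:conv3}. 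For \eqref{eq:conv6} one may either use the same zero-mean argument with \eqref{eq:conv4}, obtaining $\|\nabla u_\epsilon\|_q\le C\|\nabla u\|_q$, or the bluntly worse scaling $\nabla u_\epsilon=(\nabla\rho_\epsilon)\ast u$ with Young's inequality, which already gives the stated $\epsilon^{-1}\|u\|_q$ bound. The only subtle point worth flagging is the use of the zero-mean property of $\nabla\rho_\epsilon$ in \eqref{eq:conv3}: without it, Young's inequality would produce the wrong power $\epsilon^{\beta-1-3(1-\beta)}$ in the relevant range; this cancellation against a finite difference is the mechanism transferring Besov regularity to the derivative of the mollification.
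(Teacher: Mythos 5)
Your proposal is correct: Young's inequality for \eqref{eq:conv7} and \eqref{eq:conv6}, the definition of the Nikol'ski\u{\i} semi-norm for \eqref{eq:conv1}, Minkowski's integral inequality over the support of $\rho_\epsilon$ for \eqref{eq:conv2} and \eqref{eq:conv5}, and the zero-mean cancellation $\int\nabla\rho_\epsilon=0$ for \eqref{eq:conv3} are exactly the standard mechanisms. The paper itself gives no proof of this lemma (it only points to \cite{CET1994,Ber2021,Ber2023}), and your argument is precisely the one those references carry out, so there is nothing to add beyond noting that the exponent you quote in your final caveat is immaterial: the real point, which you state correctly, is that without the cancellation one only gets $\epsilon^{-1}$ rather than $\epsilon^{\beta-1}$.
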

  In the sequel, the following well-known commutator formula derived
  in~\cite{CET1994} and known as the ``Constantin-E-Titi commutator'' will be crucial:
\begin{equation}
  \label{eq:commutator-Euler}
  (u\otimes u)_{\epsilon}=u_{\epsilon}\otimes u_{\epsilon}
  +r_{\epsilon}(u,u)-(u-u_{\epsilon})\otimes(u-u_{\epsilon}) ,  
\end{equation}
with
\begin{equation*}
  r_{\epsilon}(u,u):=\int_{\T}\rho_{\epsilon}(y)(\delta_{y}u(x)
  \otimes\delta_{y}u(x))\,\dy,\qquad\text{for}\quad \delta_{y}u(x):=u(x-y)-u(x).
\end{equation*}
%where 
%%
%%\begin{equation*}
%$  \delta_{y}u(x):=u(x-y)-u(x)$. 
%\end{equation*}
%

%%%%%%%%%%%%%%%%%%%%%%%%%%%%%%%%%%%%%%%%%%%%%%%%%%%%
%%%%%%%%%%%%%%%%%%%%%%%%%%%%%%%%%%%%%%%%%%%%%%%%%%%%
%%%%%%%%%%%%%%%%%%%%%%%%%%%%%%%%%%%%%%%%%%%%%%%%%%%%
%%%%%%%%%%%%%%%%%%%%%%%%%%%%%%%%%%%%%%%%%%%%%%%%%%%%

\section{On the conservation of energy for ideal fluids}\label{sec3}
We prove Theorem~\ref{thm:1} and,
% By the work in~\cite{CET1994} we know that Leray weak solutions of
% the Navier-Stokes equations conserve the energy under the regularity
% $u\in L^3(0,T;B_{3,\infty}^\alpha(\Omega))$ with
% $\alpha\in(\frac{1}{3},1)$.
for $\beta\in(\frac{1}{3},1)$, we investigate the minimum Besov
regularity that is needed, so that weak solutions of the Euler
equations conserve their kinetic energy.

%We write the weak formulation for NSE: \begin{equation}
%    \int_0^T\int_{\T}\left(\del_tu\cdot\varphi+u\otimes u:\nabla\varphi+\nu\nabla u:\nabla\varphi\right)\dx\dt=0, ~~\forall\varphi\in C_{0,\sigma}^\infty
%  \end{equation}
  \begin{proof}[Proof of Theorem~\ref{thm:1}]
    We test the Euler equations against
    $\varphi=(\bve_\epsilon)_\epsilon$ to obtain \begin{equation}
      \begin{split}\label{energy-eq}
        \frac{1}{2}\|\bve_\epsilon(T)\|_{L^2(\Omega_T)}^2 &
        % + \nu\int_0^T\|\nabla u_\epsilon\|^2_{L^2(\Omega)}\dt
        =\frac{1}{2}\|\bve_\epsilon(0)\|_{2}^2-\int_0^T\int_{\T}
        r_\epsilon(\bve,\bve):\nabla \bve_\epsilon\,\dx\dt
        \\
        & +
        \int_0^T\int_{\T}(\bve-\bve_\epsilon)\otimes(\bve-\bve_\epsilon):\nabla
        \bve_\epsilon\,\dx\dt,
      \end{split}
    \end{equation}
    since
    $ \int_0^T\int_{\T} \bve_\epsilon\otimes \bve_\epsilon:\nabla \bve_\epsilon\,\dx\dt=0$,
    due to $\bve_{\epsilon}$ being smooth and divergence-free.  We now estimate the last two
    terms from the right-hand side, using the properties of Besov spaces.
    %\eqref{eq:conv2}-\eqref{eq:conv3}.
%
%
%\begin{equation} \label{difference}
% \|u-u_\epsilon\|_{q}\leq\epsilon^\alpha\|u\|_{B_{q,\infty}^\alpha}
%\end{equation} \begin{equation} \label{gradient}
% \|\nabla u_\epsilon\|_{q}\leq\epsilon^{\alpha-1}\|u\|_{B_{q,\infty}^\alpha}
%\end{equation} 
Indeed, for $0<\eta<2$ and $q>1$, such that $q-(1+\eta)>0$ we write:
\[ \begin{split} I_1 &
    :=\left|\int_0^T\int_{\T}(\bve-\bve_\epsilon)\otimes(\bve-\bve_\epsilon):\nabla
      \bve_\epsilon\,\dx\dt\right|
    \\
    & \leq
    \int_0^T\int_{\T}|\bve-\bve_\epsilon|^\eta|\bve-\bve_\epsilon|^{2-\eta}|\nabla
    \bve_\epsilon|\,\dx\dt
    \\
    & \leq \int_0^T\|\bve-\bve_\epsilon\|_{q}^\eta\,
    \|\bve-\bve_\epsilon\|_{\frac{(2-\eta)q}{q-(1+\eta)}}^{2-\eta}\|\nabla
    \bve_\epsilon\|_{q}\,\dt,
      \end{split} \]
    %and we have to fix $\eta$ such that
    and in the second line we used H\"older's inequality. Since a weak solution
    $\bve$ is in $L^\infty(0,T;H)$, if $\frac{(2-\eta)q}{q-(1+\eta)}=2$, we can use the energy
    bound to infer
    \begin{equation*}
      \|\bve-\bve_\eps\|_{2}\leq\|\bve\|_{2}+\|\bve_\eps\|_{2}\leq
      2\|\bve\|_{2}\leq2\,\textnormal{esssup}_{t\in(0,T)}\|\bve\|_{2}\leq C,
    \end{equation*}
    and thus by~\eqref{eq:conv2}-\eqref{eq:conv3}
    \[ \begin{split}
        I_1 & \leq C\int_0^T\|\bve-\bve_\eps\|_{q}^\eta\,\|\nabla
        \bve_\eps\|_{q}\,\dt
        %\\
         \leq
        C\eps^{\beta\eta+\beta-1}\int_0^T[\bve(t)]_{B_{q,\infty}^\beta}^{\eta+1}\dt,
      \end{split} \] where $C>0$ does not depend on $\eps>0$.

    Next, we estimate the remainder term in the commutator as follows:
    \[ \begin{split}
      r_\epsilon(\bve,\bve) & =
      \int_{\T}\rho_\epsilon(y)(\bve(x-y)-\bve(x))\otimes(\bve(x-y)-\bve(x))\,\dy
      \\
      & \overset{y=\eps z}{=} \int_{\T}\rho(z)(\bve(x-\epsilon
      z))-\bve(x))\otimes(\bve(x-\epsilon 
      z))-\bve(x))\,\dz 
      \\
      & \leq \int_{\T}|\bve(x-\epsilon z)-\bve(x)|^2\,\dz.
    \end{split} \] 
Then, as above, we can write for $0<\eta<2$ such that $\frac{(2-\eta)q}{q-(1+\eta)}=2$: 
\[ \begin{split} I_2 & := \left|\int_0^T\int_{\T} r_\epsilon(\bve,\bve):\nabla
    \bve_\epsilon\,\dx\dt\right|
  \\
  & \leq \int_0^T\int_{\T}\int|\bve(x-\epsilon z)-\bve(x)|^2|\nabla \bve_\eps|\,\dz\dx\dt
  \\
  & = \int_0^T\int_{\T}\int_{\T}|\bve(x-\epsilon z)-\bve(x)|^\eta|\bve(x-\epsilon
  z)-\bve(x)|^{2-\eta}|\nabla \bve_\eps|\,\dz\dx\dt
  \\
  & \leq C\int_0^T\int_{\T}\|\nabla \bve_\eps\|_{q}\,\|\bve(\cdot-\eps
  z)-\bve(\cdot)\|_{q}^\eta\,\|\bve(\cdot-\eps
  z)-\bve(\cdot)\|_{\frac{(2-\eta)q}{q-(1+\eta)}}^{2-\eta}\,\dx\dt.
      \end{split} \] 
    and using~\eqref{eq:conv2}-\eqref{eq:conv3} we arrive at
 \[ \begin{split}
        I_2 & \leq
        C\eps^{\beta\eta+\beta-1}\int_0^T\int_{\T}|z|^{\eta\beta}[\bve]_{B_{q,\infty}^\beta}^{\eta+1}\,\dz\dt  
        %\\
        %&
 \leq
        C\eps^{\beta\eta+\beta-1}\int_0^T\|\bve\|_{B_{q,\infty}^\beta}^{\eta+1}\dt,
      \end{split} \] with $C>0$ independent of $\eps$.

    Hence, for fixed $\beta\in(\frac{1}{3},1)$, we want to find
    $(q,\eta)\in(1,+\infty)\times(0,2)$ such that $\eta+1$ (the
    exponent of the Besov semi-norm) is the smallest possible, subject
    to the following set of constraints:
    \begin{equation*}
    \left\{\begin{array}{cc}
             \beta\eta+\beta-1>0
             \\
             \eta<q-1
             \\
             \frac{(2-\eta)q}{q-(1+\eta)}=2
           \end{array}\right..
       \end{equation*}
       Hence, we set $\eta=\frac{1-\alpha}{\alpha}$ and
       $q=\frac{2}{1-\alpha}$. The last two constraints are satisfied
       for $\alpha>1/3$ (leading to $q>3$); next, if
       $\beta>\alpha$ %=\alpha+\delta$, for some $\delta>0$
       , then
       $\beta\eta+\beta-1=\frac{\beta-\alpha}{\alpha}>0$ and consequently
       %all terms relative to the convective term vanish as $\epsilon\to0$
       %since for this choice of exponents
%
       \[
         0\leq I_1+I_2\leq
         C\epsilon^\frac{\beta-\alpha}{\alpha}\int_0^T[\bve]^{\beta}_{B_{\frac{2}{1-\alpha},\infty}^\beta}\dt\overset{\epsilon\to0}{\longrightarrow}0
       \] 
and letting $\epsilon\to0$ in~\eqref{energy-eq} gives
\begin{equation} \label{energy-equation}
  \frac{1}{2}\|\bve(t)\|_{L^2}^2%+ \nu\int_0^t\|\nabla
  % u\|^2_{L^2(\Omega)}
  =\frac{1}{2}\|\bve(0)\|_{L^2}^2.
 \end{equation}
 Therefore, for $\alpha\in(\frac{1}{3},1)$, the ``critical'' space for
 energy conservation is
 $L^{1/\alpha}(0,T;B^\alpha_{\frac{2}{1-\alpha},\infty})$.
 \end{proof}

\medskip                                             

We now prove the second theorem, corresponding to conditions on the gradient of $\bve$,
which would, formally, be the same with $\alpha=1$, but in fact the result here is much
stronger, since the bound on the gradient allows us to make sense of the convective term
in a more precise manner.
\begin{proof}[Proof of Theorem~\ref{thm:2}]
  In the case $\alpha=1$, the required regularity for energy
  conservation is $\nabla u\in L^{r}(0,T;L^q(\Omega))$, for
  $r>\frac{5q}{5q-6}$, as follows from the following computations. The
  approach is similar as before and we just need to control the
  commutator terms, after testing the equations by $(\bve_\eps)_\eps$
  (we make explicit computations only for this one, since the
  remainder can be handled as we have done before). We get in fact
  \begin{align*}
    I_1:&= \left|\int_0^T\int_{\T}(\bve-\bve_\eps)\otimes(\bve-\bve_\eps):\nabla
      \bve_\eps\,\dx\dt\right|
    \\
    &\leq \int_0^T\int_{\T}|\bve-\bve_\eps|^2|\nabla \bve_\eps|\,\dx\dt
    \\
    & \leq \int_0^T\|\bve-\bve_\eps\|^2_{2p}\,\|\nabla \bve_\eps\|_{p'}\,\dx\dt
    \\
    & \leq
    \int_0^T\|\bve-\bve_\eps\|_{2}^{2\theta}\,\|\bve-\bve_\eps\|_{q}^{2(1-\theta)}\|\nabla
    \bve_\eps\|_{p'}\,\dx\dt,
    \end{align*} 
    where in the second step we used H\"older's inequality with
    conjugate exponents $p$ and $p'$ (to be determined), and in the
    third one convex interpolation such that
    $\frac{1}{2p}=\frac{\theta}{2}+\frac{1-\theta}{q}$, with
    $2p<q$. Now, using the fact that $\bve\in L^\infty(0,T;L^2(\T))$
    and inequality~\eqref{eq:conv7} for the gradient of $\bve$
    \[ \|\nabla
      \bve_\eps\|_{p'}\leq C\eps^{-3\left(\frac{1}{q}-\frac{1}{p'}\right)}\|\nabla
      \bve\|_{q}, \qquad\text{for }p'>q,
      \] we obtain:
    \begin{align*} I_1&\leq
      C\eps^{-3\left(\frac{1}{q}-\frac{1}{p'}\right)}\int_0^T\|\bve-\bve_\eps\|_{q}^{\frac{2q(p-1)}{p(q-2)}}\|\nabla
      \bve\|_{q}\,\dt \end{align*}
    and \eqref{eq:conv5}-\eqref{eq:conv6} yield:
    \begin{align*} I_1\leq
      C\eps^{\frac{2q(p-1)}{p(q-2)}-3\left(\frac{1}{q}-\frac{1}{p'}\right)}\int_0^T\|\nabla
      \bve\|_{q}^{\frac{2q(p-1)}{p(q-2)}+1}\,\dt.
    \end{align*}
    Now, we want to choose $p$ such that the exponent of $\eps$ is non-negative,
    corresponding to
    \begin{equation*}
      p>\frac{(5q-6)q}{5q^2-9q+6},
\end{equation*}
and notice that the expression on the denominator is strictly
positive. Moreover, the constraints $2p<q$ and $p'>q$
imply that $p<\min\left\{\frac{q}{2},\frac{q}{q-1}\right\}$ and thus
the range of allowed $p$ is
 \begin{equation*}
\frac{(5q-6)q}{5q^2-9q+6}<p<\min\left\{\frac{q}{2},\frac{q}{q-1}\right\}.
\end{equation*}
The second term on the right-hand side of~\eqref{energy-eq} is handled
the same way and yields the same range for $p$. Then, for the infimum
value of $p$ that makes the exponent of $\eps$
non-negative, we get that the
exponent $r$ of the $L^q$ norm of the gradient becomes
$r=\frac{5q}{5q-6}$ and thus the ``critical'' space for energy conservation
(in the case $\alpha=1$) is
$\nabla \bve\in L^{\frac{5q}{5q-6}}(0,T;L^q(\T))$.
\end{proof}

%%%%%%%%%%%%%%%%%%%%%%%%%%%%%%%%%%%%%%%%%%%%%%%%%%%%
%%%%%%%%%%%%%%%%%%%%%%%%%%%%%%%%%%%%%%%%%%%%%%%%%%%%
%%%%%%%%%%%%%%%%%%%%%%%%%%%%%%%%%%%%%%%%%%%%%%%%%%%%
%%%%%%%%%%%%%%%%%%%%%%%%%%%%%%%%%%%%%%%%%%%%%%%%%%%%

\section{Inviscid limit from Navier-Stokes to Euler}\label{sec4}
In this section we prove Theorem~\ref{thm:3}, dealing with the inviscid (singular)
limit $\nu\to0$ and identifying sufficient conditions to construct weak solutions of the
Euler equations conserving the kinetic energy. 
%\begin{theorem}[Convergence]
%    Let $u^\nu\in L^\infty(0,T;H)\cap L^2(0,T;V)$ be a Leray weak solution to the incompressible Navier-Stokes equations: \begin{align}
%        \del_tu^\nu+(u^\nu\cdot\nabla)u^\nu-\nu\Delta u^\nu+\nabla p^\nu=0, & ~\textnormal{in}~\Omega_T \\
%        \diver r u^\nu=0, & ~\textnormal{in}~\Omega_T \\
%        u^\nu(x,0)=u_0^\nu(x), & ~\textnormal{in}~\Omega_T
%    \end{align} for $u_0^\nu\in L^2(\Omega)$ such that $\diver r u_0=0$ in $\Omega$.
%
%    If $u^\nu\in L^{1/\alpha}(0,T;B^\alpha_{2/(1-\alpha),\infty}(\Omega))$, then as $\nu\to0$, $u^\nu$ converges to a weak solution $u$ of the incompressible Euler equations: \begin{align}
%        \del_tu+(u\cdot\nabla)u+\nabla p=0, & ~\textnormal{in}~\Omega_T \\
%        \diver r u=0, & ~\textnormal{in}~\Omega_T \\
%        u(x,0)=u_0(x), & ~\textnormal{in}~\Omega_T
%    \end{align}
%\end{theorem}
%
\begin{proof}[Proof of Theorem~\ref{thm:3}]
We give the proof only in the Besov case, since the other one is pretty similar.  In the weak formulation~\eqref{eq:distributional-solution2} of the NSE
%~\eqref{eq:NSEbvip}, 
%  \begin{equation}
%    \label{wk-form}
%    \int_0^t\int_{\T}\del_tu^\nu\cdot\varphi+u^\nu\otimes u^\nu:\nabla\varphi+\nu\nabla u^\nu:\nabla\varphi=0, ~~\forall\varphi\in C_{0,\sigma}^\infty,~\forall t\in[0,T]
%  \end{equation}
we set $\varphi=(v_\epsilon^\nu)_\epsilon$.
% and let
%$\epsilon\to0$, to obtain the energy equation
%\eqref{energy-equation}, from which, 
Note that since $v^\nu_0\in L^2(\Omega)$, we deduce, being $v^\nu$ a
Leray-Hopf solution, that
\begin{equation*}
\|u^\nu\|_{L^\infty(0,T;L^2(\T))}+ \|\sqrt{\nu}\nabla u^\nu\|_{L^2((0,T)\times\T)}\leq C.
\end{equation*}
 Moreover,
since
$v^\nu\in L^{1/\alpha}(0,T;B^\beta_{2/(1-\alpha),\infty})$,
it has a derivative in the sense of distributions in the space
$L^{1/2\alpha}(0,T;B_{1/(1-\alpha),\infty}^{\beta-2})$, given by
%
%\begin{equation*}
$\frac{dv^\nu}{dt}=-\mathbb{P}\diver (v^\nu\otimes
 v^\nu)+\nu\Delta v^\nu$, where $\mathbb{P}$ is the Leray projector.
%\end{equation*}
%
Indeed, by comparison,  (the subscript ``$\sigma$'' means divergence-free)
\begin{equation*} 
\big\langle\int_0^T\del_tv^\nu,\phi \,\dt\big\rangle=\big\langle\int_0^T-\diver(v^\nu\otimes
v^\nu)+\nu\Delta u^n,\phi \,dt\big\rangle,\qquad\forall\,\phi\in 
 C_{\sigma}^\infty(\T),
\end{equation*} {where $\langle\cdot,\cdot\rangle$ is the duality pairing between elements of $\mathcal{D}^*(\T)$ and $\mathcal{D}(\T)=C^\infty(\T)$.}
 Choosing $\phi(x,t)=\psi(t)\varphi(x)$, with
$\psi\in C_{0,\sigma}^\infty(0,T)$ and
$\varphi\in C_{\sigma}^\infty(\T)$ we obtain
\[
\big\langle\int_0^T\del_tv^\nu\psi,\varphi\,\dt\big\rangle=\int_0^T\psi(t)\big\langle\left[-\mathbb{P}\diver
  (v^\nu\otimes v^\nu)+\nu\Delta u^n\right],\varphi\big\rangle\,\dt
\]
and note that
%
%\marginpar{\tcr{missing square on norms}}
\[
  \begin{split} \|\mathbb{P}\diver (v^\nu\otimes
  v^\nu)\|_{L^{1/2\alpha}(0,T;B_{1/(1-\alpha),\infty}^{\beta-2})} & \leq C\|v^\nu\otimes
  v^\nu\|_{L^{1/2\alpha}(0,T;B_{1/(1-\alpha),\infty}^{\beta-1})}
  \\
  & {\leq C\|v^\nu\|^2_{L^{1/\alpha}(0,T;B_{2/(1-\alpha),\infty}^{\beta-1})} }
  \\
  & \leq C\|v^\nu\|^2_{L^{1/\alpha}(0,T;B_{2/(1-\alpha),\infty}^{\beta})}.
  \end{split} \]
Moreover, since $\T$ is bounded and $T<+\infty$, the embeddings
  \[
    L^{1/\alpha}(0,T;B_{2/(1-\alpha),\infty}^{\beta-2})\hookrightarrow L^{1/2\alpha}(0,T;B_{2/(1-\alpha),\infty}^{\beta-2})\hookrightarrow
  L^{1/2\alpha}(0,T;B_{1/(1-\alpha),\infty}^{\beta-2}),
\] are continuous, thus
 \[ \begin{split}
   \|\Delta v^\nu\|_{L^{1/2\alpha}(0,T;B_{1/(1-\alpha),\infty}^{\beta-2})} & \leq
   C\|\Delta v^\nu\|_{L^{1/2\alpha}}(0,T;B_{2/(1-\alpha),\infty}^{\beta-2}) 
   \\
   & \leq C\|\Delta v^\nu\|_{L^{1/\alpha}}(0,T;B_{2/(1-\alpha),\infty}^{\beta-2})
   \\
   &\leq
 C\|v^\nu\|_{L^{1/\alpha}(0,T;B_{2/(1-\alpha),\infty}^{\beta})}, 
\end{split} 
\]
and we conclude that $\del_tv^\nu\in L^{1/2\alpha}(0,T;B_{1/(1-\alpha),\infty}^{\beta-2})$.

Therefore, by the Aubin-Lions' lemma, there exists a sub-sequence
(which is not relabeled) such that:
\[
\begin{aligned}
  v^\nu&\to v ~\textnormal{strongly in}~ L^q(0,T;H) \quad\forall\, q\in(1,\infty)
\\
  \sqrt{\nu}\nabla v^\nu &\rightharpoonup 0 ~\textnormal{weakly in}~ L^2(0,T;H) 
\\
  \del_t v^\nu&\rightharpoonup\del_t v ~\textnormal{weakly in}~
  L^{1/2\alpha}(0,T;B_{1/(1-\alpha),\infty}^{\beta-2}),
\end{aligned}
\]
 which is enough to pass to the limit as $\nu\to0$
in~\eqref{eq:distributional-solution2}, proving that $v$ is a solution of the Euler equations.

As a final step, we show that the dissipation in the
energy equation goes to zero as $\nu\to0$, yielding an energy equation for the
limiting solution $v$, as well. Indeed, for $\beta>\alpha$ and
$\alpha\in(\frac{1}{3},\frac{1}{2}]$:
\[ \begin{split} \nu\int_0^T\|\nabla v_{\eps}^\nu\|_{2}^2\,\dt & \leq
    C\nu\int_0^T\|\nabla v_{\eps}^\nu\|_{\frac{2}{1-\alpha}}^2\,\dt
    %\\
    %&
    \leq
    C\nu\eps^{2(\beta-1)}\int_0^T\|v^\nu\|_{B^\beta_{\frac{2}{1-\alpha},\infty}}^2\,\dt
    \\
    &
    \leq
    C\nu\eps^{2(\beta-1)}\|v^\nu\|_{L^\frac{1}{\alpha}(0,T;B^\beta_{\frac{2}{1-\alpha},\infty})}^2,
\end{split} \]
where $C>0$ does not depend on $\eps$ or $\nu$ and we need to choose
$\nu$ going to zero faster than $\eps^{2(1-\beta)}$. This is the
extension of the result in~\cite{Ber2023} for the H\"older case. In
the case $\alpha>1/2$ we prove here a slightly better result, since
\[
  \begin{split}
    \nu\int_0^T\|\nabla v_{\eps}^\nu\|_{2}^2\,\dt & =
    \nu\int_0^T\|\nabla v_{\eps}^\nu\|_{2}^{2-\frac{1}{\beta}}\|\nabla
    v_{\eps}^\nu\|_{2}^{\frac{1}{\beta}}\,\dt
%    \\
%    &
    \leq
    \nu\int_0^T\left(\frac{1}{\eps}\|v_{\eps}\|_{2}\right)^{2-\frac{1}{\beta}}\left(\eps^{\beta-1}
       \|v_{\eps}\|_{B^\beta_{\frac{2}{1-\alpha},\infty}}\right)^{\frac{1}{\beta}}\,\dt 
    \\
    & \leq
    C\nu\eps^{-1}\|v^\nu\|_{L^\frac{1}{\alpha}(0,T;B^\beta_{\frac{2}{1-\alpha},\infty})}^2,
  \end{split}
\]
with $C>0$ independent of $\eps$ and $\nu$. One needs to choose
$\nu$ going to zero faster than $\eps$.

So, in the  case $\frac{1}{3}<\beta\leq\frac{1}{2}$ we have 
%
%\[
 $ \nu\int_0^T\|\nabla v_{\eps}^\nu\|_{2}^2\,\dt=O(\nu\eps^{2(\beta-1)})$,
%\]
%
and thus
\[
  \frac{1}{2}\|v^\nu(T)\|_{2}^2-\frac{1}{2}\|u_0^\nu\|_{2}^2=O(\eps^{\frac{\beta-\alpha}{\alpha}})
  +O(\nu\eps^{2(\beta-1)});
\]
on the other hand in the case $\frac{1}{2}<\beta<1$ we have
% \[
$\nu\int_0^T\|\nabla v_{\eps}^\nu\|_{2}^2\,\dt=O(\nu\eps^{-1})$,
%\]
%
 and thus
\[
  \frac{1}{2}\|v^\nu(T)\|_{2}^2-\frac{1}{2}\|u_0^\nu\|_{2}^2=O(\eps^{\frac{\beta-\alpha}{\alpha}})
  +O(\nu\eps^{-1}).
\]
Since $\eps>0$ is arbitrary, we can optimize the upper bound, the same way it was
performed in~\cite{DE2019}, by balancing the contribution of the nonlinear flux with the
one of the dissipation. Choosing $\eps\sim\nu^{\alpha/(\alpha+\beta-2\alpha\beta)}$ in the first case and
$\eps\sim\nu^{\alpha/\beta}$ in the second one, yields the upper bounds
%\marginpar{\tcr{I think it should be %$\frac{\beta-\alpha}{\alpha-%2\alpha\beta+\beta}$ - the next is ok}}%
\[
  \frac{1}{2}\|v^\nu_{\eps}(T)\|_{2}^2-\frac{1}{2}\|u_0^\nu\|_{2}^2=O({\nu^\frac{\beta-\alpha}{\alpha-2\alpha\beta+\beta}
%  \frac{1}{\beta-\alpha}
}),
\]
and
\[
  \frac{1}{2}\|v^\nu_{\eps}(T)\|_{2}^2-\frac{1}{2}\|u_0^\nu\|_{2}^2=O({\nu^\frac{\beta-\alpha}{\beta}}),
  \]
respectively, hence showing that as $\eps,\nu\to0$ with the above rates, the kinetic
energy is conserved.
\end{proof}
\section*{Acknowledgments}
LCB acknowledges support by MIUR, within
project PRIN20204NT8W4: Nonlinear evolution PDEs, fluid dynamics and
transport equations: theoretical foundations and applications and also by INdAM GNAMPA. LCB
also thanks King Abdullah University of Science and Technology (KAUST)
for the support and hospitality during the preparation of the paper. SG was partially supported by King Abdullah University of Science and Technology (KAUST) baseline funds. SG also acknowledges partial support from the Austrian Science Fund (FWF), grants P33010 and F65. This work has received funding from the European Research Council (ERC) under the European Union's Horizon 2020 research and innovation programme, ERC Advanced Grant no. 101018153.

\medskip

{\bf Conflict of interest.} The authors have no conflict of interest to report.
%
%\bibliographystyle{plain}
%\bibliography{../../../buro/raccoglitore/lista,../../../buro/raccoglitore/altri,../../../buro/raccoglitore/libri}
\def\ocirc#1{\ifmmode\setbox0=\hbox{$#1$}\dimen0=\ht0 \advance\dimen0
  by1pt\rlap{\hbox to\wd0{\hss\raise\dimen0
  \hbox{\hskip.2em$\scriptscriptstyle\circ$}\hss}}#1\else {\accent"17 #1}\fi}
  \def\polhk#1{\setbox0=\hbox{#1}{\ooalign{\hidewidth
  \lower1.5ex\hbox{`}\hidewidth\crcr\unhbox0}}} \def\cprime{$'$}

\end{document}